\renewcommand{\d}{\mathrm{d}}%exterior derivative
\newcommand{\bbC}{{\mathbb C}}
\newcommand{\bbP}{{\mathbb P}}
\newcommand{\SO}{\operatorname{SO}}
\DeclareMathOperator{\Hom}{Hom}
\newcommand{\ssf}{\mathsf{s}}
\newcommand{\bsf}{\mathsf{b}}
\newcommand{\tsf}{\mathsf{t}}
\newcommand{\psf}{\mathsf{p}}
\newcommand{\zsf}{\mathsf{z}}
\newcommand{\la}{\langle}
\newcommand{\ra}{\rangle}
\newcommand{\Gtwo}{\mathrm{G}_2}
\newcommand{\w}{{\mathchoice{\,{\scriptstyle\wedge}\,}{{\scriptstyle\wedge}}
      {{\scriptscriptstyle\wedge}}{{\scriptscriptstyle\wedge}}}}
\newcommand{\lhk}{\mathbin{\hbox{\vrule height1.4pt width4pt depth-1pt 
             \vrule height4pt width0.4pt depth-1pt}}}
\newcommand{\be}{\begin{equation}}
\newcommand{\ee}{\end{equation}}
\newcommand{\bpm}{\begin{pmatrix}}
\newcommand{\epm}{\end{pmatrix}}
\numberwithin{equation}{section}
\newtheorem{theorem}{Theorem}
\newtheorem{proposition}{Proposition}
\newtheorem{corollary}{Corollary}
\theoremstyle{remark}
\newtheorem{definition}{Definition}
\newtheorem{remark}{Remark}
\begin{document}

\author[R. Bryant]{Robert L. Bryant}
\address{Duke University Mathematics Department\\
         PO Box 90320\\
         Durham, NC 27708-0320}
\email{\href{mailto:bryant@math.duke.edu}{bryant@math.duke.edu}}
\urladdr{\href{http://www.math.duke.edu/~bryant}%
         {http://www.math.duke.edu/\lower3pt\hbox{\symbol{'176}}bryant}}

\title[Closed $\Gtwo$ Solitons]
      {The generality of closed $\Gtwo$ solitons}

\date{April 30, 2022}

\begin{abstract}
The local generality of the space of solitons 
for the Laplacian flow of closed $\Gtwo$-structures 
is analyzed, and it is shown that the germs of such
structures depend, up to diffeomorphism, 
on 16 functions of 6 variables (in the sense of \'E.~Cartan).
The method is to construct a natural exterior differential system
whose integral manifolds describe such solitons and to show
that it is involutive in Cartan's sense, so that Cartan-K\"ahler
theory can be applied. 

Meanwhile, it turns out that, for the more special case of
\emph{gradient} solitons, the natural exterior differential system
is \emph{not} involutive, and the generality of these structures
remains a mystery.
\end{abstract}

\subjclass{
 53E99,  %Geometric evolution equations
 53C29%  Issues of holonomy in differential geometry
}

\keywords{$G_2$-structures, solitons}

\thanks{
Thanks to the Simons Foundation for its support 
via the Simons Collaboration Grant 
``Special Holonomy in Geometry, Analysis, and Physics''. 
}

\maketitle

\emph{Dedicated, with much admiration and gratitude for his many years of  advice, encouragement, and friendship, to H.~Blaine Lawson, Jr.~on the occasion of his 
80th birthday.}

\setcounter{tocdepth}{2}
\tableofcontents

\section{Introduction}\label{sec: intro}
For the necessary background on $\Gtwo$-structures and the notation and conventions that this article uses, the reader may consult \cite{Bryant0}
and, especially,~\cite{Bryant1}.

\subsection{$\Gtwo$-structures} 
A (smooth) $\Gtwo$-structure on a $7$-manifold~$M$ is a (smooth) 
$3$-form $\phi\in\Omega^3(M)$ that is \emph{definite} in the sense that, 
for any nonzero tangent vector $v\in T_xM$, 
the $7$-form~$(v\lhk \phi)\w(v\lhk \phi)\w\phi\in \Lambda^7(T^*_xM)$ 
is also nonzero.  The space of definite $3$-forms on $M^7$ 
will be denoted $\Omega^3_+(M)$.  

Given a $\sigma\in\Omega^3_+(M)$, there is a unique metric $g_\sigma$ and orientation $\ast_\sigma$ (i.e., the Hodge star operator defined by $g_\sigma$ 
and the orientation) such that
\be
(X\lhk \sigma)\w(Y\lhk \sigma)\w\sigma = 6\la X,Y\ra_{g_\sigma}\,{\ast_\sigma} 1
\ee
for all vector fields $X$ and $Y$ on $M$.

It is not difficult to prove the first-order $6$-form identity~\cite{Bryant0}
\be\label{FundIdent}
\bigl({\ast_\sigma}{\mathrm{d}}({\ast_\sigma}\sigma)\bigr)\w{\ast_\sigma}\sigma
+ ({\ast_\sigma}\mathrm{d}\sigma) \w\sigma = 0.
\ee
In particular, the equations $\d\sigma = \d({\ast_\sigma}\sigma) = 0$ represent only $35 + 21 - 7 = 49$ first-order equations on a $\Gtwo$-structure~$\sigma$.  (It is easy to see that these $49$ equations are independent, i.e., there are no further algebraic relations between $\sigma$, $\d\sigma$ and ${\ast_\sigma}{\mathrm{d}}({\ast_\sigma}\sigma)$ beyond those implied by \eqref{FundIdent}.)

\subsection{Relation with holonomy}
Fern\'andez and Gray~\cite{FerGray} proved that, if $\sigma$ and ${\ast_\sigma}\sigma$ are both closed forms, then $\sigma$ is $g_\sigma$-parallel. This, in particular, implies that the holonomy of $g_\sigma$ is isomorphic to a subgroup of $\Gtwo\subset\SO(7)$.  Conversely, if the holonomy of a metric $g$ on $M^7$ 
is contained in $\Gtwo$, then there exists a definite $3$-form $\sigma$ 
(unique up to replacement by $-\sigma$ if the holonomy of $g$ 
is isomorphic to $\Gtwo$) satisfying $\d\sigma = \d({\ast_\sigma}\sigma) = 0$ and such that $g = g_\sigma$.  This characterization has been essential in all constructions of metrics on $7$-manifolds with holonomy $\Gtwo$, indeed, in even proving their existence.

While it is easy to write down a local formula for the `generic' closed $\Gtwo$-structure~$\sigma$, the full equations $\d\sigma = \d{\ast_\sigma}\sigma = 0$ are highly nonlinear (as well as being overdetermined).  In~\cite{Bryant0}, it was shown how to interpret these equations as an involutive exterior differential system (see~\cite{Bryant2}) and analyze their local generality in the sense of \'Elie Cartan.  In particular, it was shown that the `general' solution, up to diffeomorphism, depends on six functions of six variables.  

One way of expressing this in terms that are, perhaps, more familiar is the following:  Suppose that one considers the space $\mathcal{J}^k$ of $k$-jets of germs of solutions of $\d\sigma = \d{\ast_\sigma}\sigma = 0$ on a neighborhood of $0\in\mathbb{R}^7$ with the property that the standard coordinates on $\mathbb{R}^7$ are geodesic normal coordinates for $g_\sigma$ at~$0$ and the value of $\sigma$ at $0$ takes a standard form\footnote{This nearly removes the diffeomorphism ambiguity. In order to completely remove it, one would have to quotient by the natural action of $\Gtwo$ on these jet spaces.}.  Let $\delta_k = \dim \mathcal{J}^k - \dim \mathcal{J}^{k-1}$ and consider the Poincar\'e series $P(t) 
= \delta_0 + \delta_1\,t + \delta_2\,t^2 + \cdots$.  Then 
\be
P(t) = t^2\left(\frac{14}{(1{-}t)^2} +\frac{21}{(1{-}t)^3}
+\frac{21}{(1{-}t)^4}+ \frac{15}{(1{-}t)^5} +\frac{6}{(1{-}t)^6} \right).
\ee
In particular, the `leading term' contributing to the growth of $\delta_k$, 
namely $6t^2/(1{-}t)^6$, is the same, up to a degree shift, as the `leading term' 
describing the growth of $k$-jets of $6$ functions of $6$ variables. 

\subsection{Solitons}
D.~Joyce pioneered the approach to constructing compact examples of metrics with holonomy $\Gtwo$ by starting with a `soft' construction of a closed $\Gtwo$-structure $\sigma$ on certain $7$-manifolds with the property that $\d({\ast_\sigma}\sigma)$ had sufficiently small norm (in comparison with other geometric features of $g_\sigma$) and showing that, in such situations, 
one could `perturb' the given closed $\Gtwo$-structure $\sigma$ 
to a nearby one that was also co-closed.  (See the fundamental reference~\cite{Joyce1} and the original papers cited therein for details.)

The many successes of R.~Hamilton's program for using the Ricci flow in Riemannian geometry inspired the idea of canonically `improving' a closed $\Gtwo$-structure $\sigma_0$ on a compact $7$-manifold~$M$ by considering the so-called `Laplacian flow'
\be
\frac{d\,\sigma}{dt} = \Delta_{g_\sigma}\,\sigma
\ee
with $\sigma_0$ as initial condition.  In particular, a fixed point of this flow, i.e., a closed $\Gtwo$-structure on $M$ that satisfies $\Delta_{g_\sigma}\,\sigma = 0$, would satisfy $\d\sigma = \d{\ast_\sigma}\sigma=0$ and hence lead to a metric $g_\sigma$ with holonomy contained in $\Gtwo$.  It has been shown~\cite{BryantXu} that the closed $\Gtwo$-Laplacian flow on a compact $7$-manifold has short-time existence and uniqueness.

Meanwhile, Hitchin~\cite{Hitchin1,Hitchin2} observed that the above Laplacian flow is (up to a constant factor) the gradient flow of the \emph{volume functional}
$$
\mathcal{V}(\sigma) = \int_{M} \sigma\w{\ast_\sigma}\sigma
$$
on the space $\mathcal{Z}(M,\gamma)\subset\Omega^3_+(M)$ of closed $\Gtwo$-structures in a fixed deRham cohomology class~$\gamma\in H^3_{dR}(M)$.  Hitchin has shown that any critical points of $\mathcal{V}$ in $\mathcal{Z}(M,\gamma)\subset\Omega^3_+(M)$ must be local maxima.  

However, there need not be any critical points even if the flow exists for all time, as the example in Remark~18 of~\cite{Bryant1} shows.

It is not clear just what sort of singularities one might encounter in the Laplacian flow, and one strategy for gaining some intuition, based on the strategy that worked in the case of Ricci flow, is to study the so-called `soliton' solutions.

\begin{definition}
Let $\lambda$ be a fixed real constant.
A pair $(\sigma,X)$, where $\sigma$ is a closed $\Gtwo$-structure on $M^7$ and $X$ is a vector field on $M^7$, is a \emph{$\lambda$-soliton} for the $\Gtwo$-Laplacian flow if 
\be
\Delta_\sigma\,\sigma = \lambda\,\sigma + \mathcal{L}_X\sigma\,.
\ee
If, in addition, $X= \nabla^{g_\sigma}f$ for some function $f$ on $M$, the pair $(\sigma, X)$ will be said to be a \emph{gradient $\lambda$-soliton}.
\end{definition}

It is not difficult to show that, if $(\sigma, X)$ is a $\lambda$-soliton for the $\Gtwo$-Laplacian flow on $M$ and $\Phi_\tau:M\to M$ is the time-$\tau$ flow of $X$, then the time-dependent $\Gtwo$-structure
\be
\phi(t) = \mathrm{e}^{\lambda t} \Phi^*_{f(t,\lambda)}\sigma
\ee
where $f(t,0) = t$ and $f(t,\lambda) = (1-\mathrm{e}^{-\lambda t})/\lambda$ when $\lambda\not=0$, satisfies the initial condition $\phi(0) = \sigma$
and the Laplacian flow equation
\be
\frac{d\,\phi}{dt} = \Delta_{g_\phi}\,\phi\,.
\ee

Many explicit examples of such $\lambda$-solitons are now known.  Most of the known examples are constructed using some version of `dimensional reduction', i.e., assuming some sort of symmetry, thereby reducing the problem to geometric data on a lower dimensional manifold, see \cite{Ball2020,Fowdar,HaskinsNordstrom2021,Lauret2017,Lotay2017}. 

The goal of this article is to investigate the local generality, in \'E.~Cartan's sense, of the space of $\lambda$-solitons.  A straightforward count of equations reveals that, locally, this is 70 equations for the 42 unknowns in $(\sigma, X)$, so the local generality of the solutions of these equations is not at all clear.  Moreover, the system is degenerate (in particular, every direction is characteristic) because it is evidently invariant under diffeomorphisms.

\section{Basic Identities}\label{BI}

In this section, I will assume that $(\sigma, X)$ is a $\lambda$-soliton for the $\Gtwo$-Laplacian flow equation on $M^7$ and explain how to express this condition  naturally in terms of an exterior differential system.

\subsection{Expression in differential forms}  
Because of the fundamental identity~\eqref{FundIdent}, the closedness of $\sigma$ implies that $\bigl({\ast_\sigma}\d({\ast_\sigma}\sigma)\bigr) \w {\ast_\sigma}\sigma = 0$, so there is a $2$-form $\tau$ that satisfies
\be\label{tauDefined}
\d({\ast_\sigma}\sigma) = \tau\w \sigma = - {\ast_\sigma}\tau.
\ee
(See \S4.6 of~\cite{Bryant1}.)  Moreover, $\tau$ satisfies
\be
\d\tau = \Delta_\sigma\sigma = \lambda\,\sigma + \mathcal{L}_X\sigma
=  \lambda\,\sigma + \d(X\lhk\sigma),
\ee
which can be rewritten as
\be\label{SolitonRephrased}
\d(\tau - X\lhk\sigma) = \lambda\,\sigma.
\ee

As explained in~\S2.6 of \cite{Bryant1}, every $2$-form $\beta\in\Omega^2(M)$ can be written uniquely as a sum $\beta = \beta_7 + \beta_{14}$ where ${\ast_\sigma}\beta_{14} = -\beta_{14}\w\sigma$ while ${\ast_\sigma}\beta_7 = \tfrac12\,\beta_7\w\sigma$.  In what follows, set $\beta = \tau - X\lhk\sigma$, so that $\beta_{14} = \tau$ and $\beta_7 = -X\lhk\sigma$.  Then \eqref{SolitonRephrased} can be rewritten as
the $3$-form equation
\be\label{SolitonRewrite}
0 = \d\beta - \lambda\,\sigma.
\ee
We already have the 4-form equation
\be\label{SigmaClosed}
0 = \d\sigma
\ee
since $\sigma$ is closed.  Using the above algebraic identities, 
\eqref{tauDefined} can be written as a 5-form equation
\be\label{tauRedefined}
0 = \d({\ast_\sigma}\sigma) - \tau\w\sigma = \d({\ast_\sigma}\sigma)
+ \tfrac23\,{\ast_\sigma}\beta - \tfrac13\,\beta\w\sigma.
\ee
Finally, taking the exterior derivative of this last equation and using the equations found so far, we have the 6-form equation
\be\label{d of tauRedefined}
0 = \d(\ast_\sigma\beta).
\ee

The virtue of the equations (2.4--7) is that they only involve the constant~$\lambda$, the definite $3$-form~$\sigma$ (and its algebraically associated Hodge star operator), and the $2$-form~$\beta$.  In fact, this is enough to recover the structure of a $\lambda$-soliton for the closed $\Gtwo$-Laplacian flow.

\begin{proposition}
Let $\lambda$ be a constant, and let $\sigma\in\Omega^3_+(M)$ 
and $\beta\in\Omega^2(M)$ satisfy equations (2.4--7).  
Then for the unique vector field $X$ on $M$ 
such that $\beta_7 = - X\lhk \sigma$, 
the pair $(\sigma, X)$ 
is a $\lambda$-soliton for the closed $\Gtwo$-Laplacian flow.  
Conversely, if $(\sigma,X)$ 
is a $\lambda$-soliton for the closed $\Gtwo$-Laplacian flow, 
then setting $\beta = -\ast_\sigma\d({\ast_\sigma}\sigma)- X\lhk\sigma$
defines a pair~$(\sigma,\beta)$ that satisfies (2.4--7).
\end{proposition}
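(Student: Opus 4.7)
The plan is to establish both implications via the identification $\beta = \tau - X\lhk\sigma$, where $\tau\in\Omega^2_{14}$ is the canonical $2$-form attached to closed $\sigma$ by $\d({\ast_\sigma}\sigma) = \tau\w\sigma$. The crucial preliminary fact, which I would verify up front from the Hodge theory of the closed $\Gtwo$-structure, is that $\d^*\sigma = \tau$, so that $\Delta_\sigma\sigma = \d\,\d^*\sigma = \d\tau$; this converts the soliton equation into a condition purely on $\d\tau$.

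For the forward implication, decompose $\beta = \beta_7 + \beta_{14}$ and substitute the identities ${\ast_\sigma}\beta_7 = \tfrac12\beta_7\w\sigma$ and ${\ast_\sigma}\beta_{14} = -\beta_{14}\w\sigma$ into equation (2.6). The contributions from $\beta_7$ cancel exactly, leaving $\d({\ast_\sigma}\sigma) = \beta_{14}\w\sigma$, which identifies $\beta_{14}$ with $\tau$. Since $X\mapsto X\lhk\sigma$ is a linear isomorphism $TM\to\Omega^2_7$, the equation $\beta_7=-X\lhk\sigma$ uniquely determines a vector field $X$. Equation (2.4) then reads $\d\tau - \d(X\lhk\sigma) = \lambda\,\sigma$, and using $\d\sigma = 0$ together with Cartan's formula this becomes $\d\tau = \lambda\,\sigma + \mathcal{L}_X\sigma$; via $\d\tau = \Delta_\sigma\sigma$, this is precisely the soliton equation.

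For the converse, setting $\beta = \tau - X\lhk\sigma$ makes $\beta_{14}=\tau$ and $\beta_7=-X\lhk\sigma$, and one verifies (2.4--7) one equation at a time. Equations (2.5) and (2.4) are immediate from $\d\sigma=0$ and the soliton equation (using $\d\tau=\Delta_\sigma\sigma$ in reverse), while (2.6) is just $\d({\ast_\sigma}\sigma)=\tau\w\sigma$ reassembled through the decomposition formulas. Equation (2.7) is in fact a consequence of the first three: taking $\d$ of (2.6) and applying $\d\sigma=0$, $\d\beta=\lambda\,\sigma$, and the algebraic identity $\sigma\w\sigma=0$ for definite $3$-forms (since $\Lambda^6\cong\Lambda^1$ contains no $\Gtwo$-invariants), one obtains $\d({\ast_\sigma}\beta)=0$ automatically.

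The main source of friction is the sign-and-coefficient bookkeeping for the $\Omega^2_7\oplus\Omega^2_{14}$ splitting in (2.6); beyond that, every ingredient -- the isomorphism $TM\cong\Omega^2_7$, the computation $\d^*\sigma=\tau$, and the vanishing $\sigma\w\sigma=0$ -- is either standard or recorded in \cite{Bryant0,Bryant1} and can be invoked without further comment.
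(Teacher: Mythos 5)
Your proof is correct and takes essentially the same route as the paper's: equation (2.6) together with the $\Omega^2_7\oplus\Omega^2_{14}$ decomposition identifies $\beta_{14}$ with the torsion form $\tau$, the identity $\Delta_\sigma\sigma=\d\tau$ (valid since $\d\sigma=0$) turns (2.4) into the soliton equation, and the converse is the same computation run backwards, with (2.7) following by differentiating (2.6). One cosmetic point: $\sigma\w\sigma=0$ needs no $\Gtwo$-representation theory, as it holds automatically for any form of odd degree.
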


\begin{proof}
This is a matter of unwinding the definitions.  We have $\d\sigma=0$,
and then equation (2.6) implies that $\tau=\beta_{14}$ and that $\d({\ast_\sigma}\sigma) = \tau\w\sigma$, so that $\d\tau = \Delta_\sigma \sigma$.  Then (2.4) becomes (2.3).  The converse follows in the same way.
\end{proof}

\begin{remark}
The reader may wonder why \eqref{d of tauRedefined} was included, 
since it is a consequence of the other three equations just by applying 
the exterior derivative to \eqref{tauRedefined}.  Moreover, when $\lambda$ is nonzero, \eqref{SigmaClosed} follows from \eqref{SolitonRewrite}, so it might seem that \eqref{SigmaClosed} could be omitted as well, at least when $\lambda$ is nonzero.

The reason is that I want to make explicit \emph{all} 
of the first-order equations satisfied by the pair $(\sigma,\beta)$.
The four equations (2.4--7) are the vanishing of a $3$-form, a $4$-form, a $5$-form, and a $6$-form, which is potentially
$$
35 + 35 + 21 + 7 = 98 
$$
independent first order equations.  However, the identity \eqref{FundIdent},
shows that these 98 equations are \emph{not} independent; they `overlap' by at least $7$ equations.  

In fact, the overlap is \emph{exactly} the $7$ equations of the fundamental identity~\eqref{FundIdent} since it is easy to see that, for a generic $\sigma\in\Omega^3_+(M)$ and $\beta\in\Omega^2(M)$, the only relation among the forms
$$
\d\beta,\  \d\sigma,\ \d(\ast_\sigma\sigma),\  \d(\ast_\sigma\beta)
$$
is the fundamental identity relating the middle two.  Thus, the equations (2.4--7) constitute exactly $91$ independent (quasi-linear) first order equations on the pair $(\sigma,\beta)$, a fact to which I will return.

Indeed, I am going to argue that this system of $91$ equations 
is \emph{involutive} in \'E.~Cartan's sense, 
so that the Cartan-K\"ahler Theorem can be applied 
to prove local existence of solutions and describe their `degree of generality'.  By contrast, the `determined' system of $35+21 = 56$ first-order equations 
for $(\sigma,\beta)$ represented by combining \eqref{SolitonRewrite} 
and \eqref{tauRedefined} is certainly \emph{not} involutive.
\end{remark}

\section{Formulation as an exterior differential system}

I now want to explain how the above system of 91 first-order partial differential equations for the pair $(\sigma,\beta)$, which is a section of a smooth bundle of fiber rank $35+21 = 56$ over~$M^7$, can be interpreted as defining an exterior differential system for such pairs.

\subsection{A differential ideal}
Let $\Lambda^3_+(T^*M)\subset\Lambda^3(T^*M)$ 
denote the open subset of the bundle of $3$-forms on $M^7$ 
whose (local) sections are the (local) $\Gtwo$-structures on~$M$.  
Similarly, let $\Lambda^2(T^*M)$ denote the bundle of $2$-forms on $M$.

Let $X\subset \Lambda^3(T^*M)\oplus\Lambda^2(T^*M)$ 
denote the set of pairs $(s,b)$ with $s\in \Lambda^3_+(T^*_mM)$ 
and $b\in \Lambda^2(T^*_mM)$ for some $m\in M$.  
This $X$ is open in $\Lambda^3(T^*M)\oplus\Lambda^2(T^*M)$ 
and is a smooth fiber bundle over~$M$ 
with a submersive base projection $\pi:X\to M$, with $\pi(s,b) = m$.
Let $\pi^*:\Lambda^k(T^*M)\to \Lambda^k(T^*X)$ 
denote the induced `pullback' mapping.

There exist natural `tautological' forms on $X$ that are defined as follows:  
For each $(s,b)\in X$, let $\ssf_{(s,b)} = \pi^*(s)$ 
and let $\bsf_{(s,b)} = \pi^*(b)$.  
Then $\ssf$ (respectively, $\bsf$) is a smooth $3$-form 
(respectively, $2$-form) on~$X$ 
with the `reproducing' property that, 
for any section $(\sigma,\beta):M\to X$, there holds
$(\sigma,\beta)^*(\ssf) = \sigma$ and $(\sigma,\beta)^*(\bsf) = \beta$.
Moreover, because exterior derivative commutes with pullback, 
the identities
$(\sigma,\beta)^*(\d\ssf) = \d\sigma$ 
and $(\sigma,\beta)^*(\d\bsf) = \d\beta$ hold as well.

I will need an extension of this construction:  
Since, for any $s\in \Lambda^3_+(T^*_mM)$, 
there is a well-defined Hodge star operator 
$\ast_s:\Lambda^p(T^*_mM)\to \Lambda^{7-p}(T^*_mM)$, 
there also exists a smooth $4$-form $\tsf$ 
(respectively, $5$-form, $\psf$) on $X$ with the property that
$\tsf_{(s,b)} = \pi^*(\ast_s s)$ 
(respectively, $\psf_{(s,b)} = \pi^*(\ast_s b)$).  
These two forms have the `reproducing' property  that,
for any section $(\sigma,\beta):M\to X$, there holds
$(\sigma,\beta)^*(\tsf) = {\ast_\sigma}\sigma$ 
and $(\sigma,\beta)^*(\psf) = {\ast_\sigma}\beta$.

Finally, there exists a smooth $7$-form $\omega$ on $X$ 
with the property that $\omega_{(s,b)} = \pi^*(\ast_s 1)$ 
that will be used below, so I introduce it now.  

Note that all of the forms $\ssf$, $\tsf$, $\bsf$, $\psf$, and $\omega$ 
are $\pi$-semibasic, i.e., their interior product 
with any $\pi$-vertical vector field on $X$ vanishes identically.

Now, by construction, for any constant $\lambda$ and any section $(\sigma,\beta):M\to X$, one has
\be\label{PullbackFormulae}
\begin{aligned}
(\sigma,\beta)^*(\d\bsf - \lambda\,\ssf) &= \d\beta - \lambda\,\sigma\\
(\sigma,\beta)^*(\d\ssf) &= \d\sigma \\
(\sigma,\beta)^*(\d\tsf + \tfrac23\,\psf - \tfrac13\,\bsf\w\ssf) 
&= \d({\ast_\sigma}\sigma) +\tfrac23\,{\ast_\sigma}\beta -\tfrac13\,\beta\w\sigma\\
(\sigma,\beta)^*(\d\psf) &= \d({\ast_\sigma}\beta)
\end{aligned}
\ee

This motivates defining the differential forms $\Upsilon_i\in\Omega^i(X)$,
\be\label{Upsilondefinitions}
\begin{aligned}
\Upsilon_3 &= \d\bsf - \lambda\,\ssf,\\
\Upsilon_4 &= \d\ssf, \\
\Upsilon_5 &= \d\tsf + \tfrac23\,\psf - \tfrac13\,\bsf\w\ssf,\\
\Upsilon_6 &= \d\psf.
\end{aligned}
\ee
Note the identities
\be
\begin{aligned}
\d\Upsilon_3 &= -\lambda\,\Upsilon_4\,,\\
\d\Upsilon_4 &= 0,\\
\d\Upsilon_5 &= \tfrac23\,\Upsilon_6 -\tfrac13\,\Upsilon_3\w\ssf
-\tfrac13\,\bsf\w \Upsilon_4\,,\\
\d\Upsilon_6 &= 0,
\end{aligned}
\ee
which imply that the algebraic ideal $\mathcal{I}_\lambda$ in $\Omega^*(X)$ generated by $\Upsilon_3, \Upsilon_4, \Upsilon_5, \Upsilon_6$ is differentially closed.  Consequently, $\mathcal{I}_\lambda$ is an exterior differential system on $X$. 

The interest in $\mathcal{I}_\lambda$ is explained by the following result.

\begin{proposition}
A section $(\sigma,\beta):M\to X$ is an integral manifold of $\mathcal{I}_\lambda$ if and only if it comes from a $\lambda$-soliton for the closed $\Gtwo$-Laplacian flow.  In particular, a $7$-dimensional integral manifold of $\mathcal{I}_\lambda$ that is transverse to the fibers of $\pi:X\to M$ is locally the graph of a section $(\sigma,\beta):M\to X$ that comes from a $\lambda$-soliton for the closed $\Gtwo$-Laplacian flow.
\end{proposition}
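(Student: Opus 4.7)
The plan is to observe that this proposition is essentially a direct translation of the previous proposition through the tautological pullback identities (2.11), which were engineered precisely for this purpose.

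For the first assertion, I would begin by noting that the algebraic ideal $\mathcal{I}_\lambda$ is generated (as an algebraic ideal in $\Omega^*(X)$) by the four forms $\Upsilon_3, \Upsilon_4, \Upsilon_5, \Upsilon_6$ together with their wedge products with arbitrary forms on $X$. For a section $(\sigma,\beta):M\to X$, whose image is a $7$-dimensional submanifold of $X$, being an integral manifold of $\mathcal{I}_\lambda$ means $(\sigma,\beta)^*\varphi = 0$ for every $\varphi\in\mathcal{I}_\lambda$. Since any form of degree greater than $7$ on the $7$-manifold $M$ is identically zero, this reduces to the four scalar conditions $(\sigma,\beta)^*\Upsilon_i = 0$ for $i=3,4,5,6$. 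By the identities (2.11), these are exactly equations (2.4)--(2.7) on the pair $(\sigma,\beta)$, and the previous proposition identifies such pairs with $\lambda$-solitons $(\sigma,X)$ via the prescription $\beta_7 = -X\lhk\sigma$. Running this argument in either direction gives the biconditional.

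For the second assertion, I would argue by transversality. If $N^7\subset X$ is a $7$-dimensional integral manifold transverse to the fibers of $\pi:X\to M$ at a point $p\in N$, then $\pi|_N$ is a local diffeomorphism near $p$, so in a suitable neighborhood $N$ is the graph of a smooth local section $(\sigma,\beta):U\to X$ with $U\subset M$ open. Because $\pi\circ(\sigma,\beta) = \mathrm{id}_U$, the integrality of $N$ transports under this parametrization to the integrality of the section, and the first part of the proposition then produces the $\lambda$-soliton.

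I expect no serious obstacle here: all substantive algebraic content has been packaged into the previous proposition and into the construction of the tautological forms $\ssf$, $\bsf$, $\tsf$, $\psf$. The only points deserving brief comment are that the four generators of $\mathcal{I}_\lambda$ suffice to detect integrality of a section (automatic for degree reasons on $M^7$), and that the vector field $X$ determined by $\beta_7 = -X\lhk\sigma$ is unique (immediate from the nondegeneracy of the map $v\mapsto v\lhk\sigma$ on $\Gtwo$-structures).
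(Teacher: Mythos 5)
Your proposal is correct and follows essentially the same route as the paper: unwind the definitions via the pullback identities \eqref{PullbackFormulae} and \eqref{Upsilondefinitions}, invoke the previous proposition, and handle the transversal case by the local-graph observation. The only minor quibble is that the reduction to the four generators is really because pullback is an algebra homomorphism (so killing the generators kills the whole algebraic ideal), not a degree-counting matter, but this does not affect the argument.
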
 

\begin{proof}
It has been shown that a pair $(\sigma,\beta)$ 
comes from a $\lambda$-soliton for the closed $\Gtwo$-Laplacian flow 
if and only if the pair satisfies the equations (2.4--7).  
Since this vanishing is exactly the vanishing of the righthand sides of~\eqref{PullbackFormulae}, 
the definitions given in \eqref{Upsilondefinitions} 
and the definition of $\mathcal{I}_\lambda$ 
show that this vanishing 
is equivalent to the section $(\sigma,\beta)$ 
being a $7$-dimensional integral manifold 
(necessarily transverse to the $\pi$-fibers) of $\mathcal{I}_\lambda$.  
The rest of the statement follows immediately.
\end{proof}

\subsection{Involutivity}
I can now state the main result of this article.

\begin{theorem}\label{involutivity}
The exterior differential system $\mathcal{I}_\lambda$ 
with independence condition $\omega$ is involutive, 
with Cartan characters
$$
(s_0,s_1,s_2,s_3,s_4,s_5,s_6,s_7) = (0,0,1,3,7,15,23,7).
$$
\end{theorem}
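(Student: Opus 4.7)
The plan is to verify involutivity of $\mathcal{I}_\lambda$ directly via Cartan's test: construct an integral flag at a generic admissible integral element, compute its Cartan characters, and check that the variety of 7-dimensional admissible integral elements at a point has the dimension predicted by those characters. The natural arena for the computation is the total space $F\to X$ of the $\Gtwo$-adapted orthonormal coframe bundle, on which the tautological forms $\ssf$, $\tsf$, $\bsf$, $\psf$ acquire explicit $\Gtwo$-standard expressions in the tautological 1-forms $\eta^1,\ldots,\eta^7$ and in the 21 fiber coordinates $b_{ij}=-b_{ji}$ defining $\bsf=\tfrac12 b_{ij}\,\eta^i\w\eta^j$. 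The Levi-Civita connection of $g_\sigma$ supplies $\mathfrak{so}(7)$-valued 1-forms $\theta^i{}_j$ obeying $\d\eta^i=-\theta^i{}_j\w\eta^j$, whose $\mathfrak{g}_2^\perp$-components encode the four intrinsic-torsion forms $(\tau_0,\tau_1,\tau_2,\tau_3)$ of the $\Gtwo$-structure, of total dimension $1+7+14+27=49$.

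Next I would substitute these expressions into $\Upsilon_3,\Upsilon_4,\Upsilon_5,\Upsilon_6$ and decompose each generator $\Gtwo$-isotypically. This yields: from $\Upsilon_4=\d\ssf$, the 35 equations $\tau_0=\tau_1=\tau_3=0$; from $\Upsilon_5$, the identification $\tau_2=\beta_{14}$ arising from its $\Omega^5_{14}$-part (14 equations), while the $\Omega^5_7$-part reduces under $\Upsilon_4$ to an identity, accounting for the 7 redundancies that \eqref{FundIdent} creates between $\Upsilon_4$ and $\Upsilon_5$; from $\Upsilon_3=\d\bsf-\lambda\,\ssf$, 35 first-order conditions on the irreducible pieces of $\nabla\beta$ (which lives in $T^*M\otimes(\Omega^2_7\oplus\Omega^2_{14})$, of dimension $147$); and from $\Upsilon_6=\d\psf$, a further 7 such conditions. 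The independent equations total $35+14+35+7=91$, as predicted by Remark 1, and their symbol is the tableau whose Cartan characters must be computed.

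I would then build an integral flag $0=E_0\subset E_1\subset\cdots\subset E_7$ at a chosen point by successively adjoining the dual vectors to $\eta^1,\eta^2,\ldots,\eta^7$ in a suitably adapted ordering together with their prescribed vertical lifts, and compute each polar codimension $c_k=\operatorname{codim}_{T_pX}H(E_k)$ as the rank of the linear conditions imposed on an extending vector by the $k$-fold interior contractions of the generators $\Upsilon_\ell$ with $E_k$. The sum rule $\sum_{k=0}^{7}s_k=c_7=\dim X-7=56$ holds automatically for any transverse integral element (which is the graph of a 1-jet) and provides a built-in consistency check, while the claim to be verified is that the increments $s_k=c_k-c_{k-1}$ form the sequence $(0,0,1,3,7,15,23,7)$. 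Cartan's test then compares $\dim V_7(\mathcal{I}_\lambda,\omega)_p$ with $s_1+2s_2+\cdots+7s_7=2+9+28+75+138+49=301$; since an admissible integral element transverse to the $\pi$-fibers is exactly the graph of a 1-jet $(\nabla\sigma,\nabla\beta)$ satisfying the 91 first-order equations inside the $7\cdot 56=392$-dimensional parameter space $T^*_mM\otimes W$, one obtains $\dim V_7=392-91=301$, matching Cartan's bound.

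The main obstacle is the $\Gtwo$-representation-theoretic bookkeeping in the middle step: identifying precisely which irreducible components of $\nabla\beta$ appear in $\Upsilon_3$ versus $\Upsilon_6$, verifying that \eqref{FundIdent} is solely responsible for the 7 redundancies among the nominal 98 first-order equations, and choosing an adapted ordering of the coframe so that the polar-codimension increments land on the target sequence $(0,0,1,3,7,15,23,7)$ all demand careful $\Gtwo$-equivariant wedge and Hodge-star calculation. Once these are in place, Cartan's criterion yields involutivity with the claimed Cartan characters.
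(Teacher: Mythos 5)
Your overall skeleton is the same as the paper's: formulate the soliton condition as the EDS $\mathcal{I}_\lambda$, compute $\dim V_7(\mathcal{I}_\lambda,\omega)_{(s,b)} = 7\cdot 56 - 91 = 301$ from the independence of the $91$ first-order equations (your isotypic bookkeeping $35+14+35+7=91$, with the $7$ redundancies coming from \eqref{FundIdent}, agrees with the paper), and then invoke Cartan's Test. The genuine gap is at the decisive step: you never actually produce an integral flag whose polar increments are $(0,0,1,3,7,15,23,7)$; you only state that the coframe-bundle computation ``must land on'' that sequence and defer it as the main obstacle. This cannot be waved through, because knowing $\dim V_7 = 301$ alone gives only the inequality $\dim V_7 \le s_1 + 2s_2 + \cdots + 7s_7$ for an arbitrary flag; involutivity (and the character sequence itself, which is part of the statement) requires exhibiting a flag that attains equality. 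As written, the proposal asserts the conclusion of the hard computation rather than performing it.

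For comparison, the paper sidesteps the brute-force polar-space computation on the adapted coframe bundle by exploiting the block structure of the symbol: at leading order the system decouples into the tableau of $\d\sigma = \d(\ast_\sigma\sigma)=0$, already known from \cite{Bryant0} to be involutive with characters $(0,0,0,1,4,10,13,7)$, and the tableau of $\d\beta = \d(\ast_\sigma\beta)=0$ for a $2$-form on a manifold with fixed $\Gtwo$-structure, involutive with characters $(0,0,1,2,3,5,10,0)$; for a suitable common flag the characters add, giving $(0,0,1,3,7,15,23,7)$, and then $\sum_k k\,s_k = 301 = \dim V_7$ closes Cartan's Test. Your route through the $\Gtwo$-coframe bundle and intrinsic torsion would presumably work, but to complete it you must either carry out the $\Gtwo$-equivariant polar-codimension computation for an explicit adapted flag (including checking regularity), or import the two known character sequences and use the direct-sum structure of the tableau as above. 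A minor further caution: your remark that $\sum_k s_k = 56$ ``holds automatically'' presumes the integral element is maximal; this is harmless here but is not a substitute for the flag computation.
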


\begin{proof}
The proof will be by Cartan's Test for involutivity 
(see, for example,~\cite{Bryant2}).  

The first thing to check for each $(s,b)\in X$ 
is the dimension of the space of $7$-dimensional integral elements 
$E\subset T_{(s,b)}X$ of $\mathcal{I}_\lambda$ 
on which $\omega$ is non-vanishing.  
I claim that this dimension is $301 = 392-91$.  

To see this, choose a splitting $T_{(s,b)}X = \ker\pi'(s,b) + W$,
where $\pi'(s,b):W\to T_{\pi(s,b)}M$ is an isomorphism.  Then an integral element $E\subset T_{(s,b)}X$ on which $\omega$ is non-vanishing can be thought of as the graph of a linear map $L:W\to \ker\pi'(s,b)$, i.e., it can be identified with an element of the vector space $\Hom(W,\ker\pi'(s,b))$.  Because the four tautological forms $\ssf$, $\bsf$, $\tsf$, and $\psf$ are $\pi$-semi-basic, it follows that the induced mapping of $\Hom(W,\ker\pi'(s,b))$ into the 98-dimensional vector space
$$
\Lambda^3(W^*)\oplus\Lambda^4(W^*)\oplus\Lambda^5(W^*)\oplus\Lambda^6(W^*)
$$
induced by evaluating $(\Upsilon_3,\Upsilon_4,\Upsilon_5,\Upsilon_6)$
on the graph of $L\in \Hom(W,\ker\pi'(s,b))$ is an affine mapping. Moreover, its image lies in the 91-dimensional subspace cut out by the fundamental identity.  Finally, looking at the `leading terms' of $(\Upsilon_3,\Upsilon_4,\Upsilon_5,\Upsilon_6)$, i.e., $(\d\bsf,\d\ssf,\d\tsf,\d\psf)$ and considering their interpretation as 
$$
\bigl(\d\beta,\d\sigma,\d(\ast_\sigma\sigma),\d(\ast_\sigma\beta)\bigr),
$$ 
one sees that the leading order part is a linear mapping that is surjective onto this 91-dimensional subspace.  

Consequently, the set~$\mathcal{V}(\mathcal{I}_\lambda,\omega)$ 
of admissible integral elements of $\mathcal{I}_\lambda$ at $(s,b)\in X$ 
is a (nonempty) affine subspace of $\Hom\bigl(W,\ker\pi'(s,b)\bigr)$ 
of dimension $S = 56\times7 - 91 = 392 - 91 = 301$.
Indeed $\mathcal{V}(\mathcal{I}_\lambda,\omega)$ is a smooth submanifold
of $\mathrm{Gr}(7,TX)$, the Grassmann bundle of $7$-dimensional subspaces
of the tangent spaces to $X$.

It remains to compute the Cartan characters and apply Cartan's Test.  
Now that it has been established that the space 
of admissible integral elements of $\mathcal{I}_\lambda$ 
at each point is nonempty, the Cartan characters 
can be computed by considering just the leading terms listed above.

First, recall that it was established in~\cite{Bryant0} 
that the Cartan characters of the (involutive) equation 
$D(\sigma) = \bigl(\d\sigma,\d(\ast_\sigma\sigma)\bigr) = (0,0)$ 
relative to any flag are
$$
(s_0,s_1,s_2,s_3,s_4,s_5,s_6,s_7) = (0,0,0,1,4,10,13,7),
$$
Meanwhile, once $\sigma$ is fixed, the characters 
of the involutive first-order system 
$D(\beta)=\bigl(\d\beta,\d(\ast_\sigma\beta)\bigr) = (0,0)$ for $\beta$ 
a $2$-form on a manifold with a specified $\Gtwo$-structure $\sigma$ are
$$
(s_0,s_1,s_2,s_3,s_4,s_5,s_6,s_7) = (0,0,1,2,3,5,10,0).
$$
It follows that there exists an integral flag of the combined system
with characters
$$
(s_0,s_1,s_2,s_3,s_4,s_5,s_6,s_7) = (0,0,1,3,7,15,23,7).
$$
It follows that $\sum_{k=0}^7 k\,s_k = 301 =S$, the dimension of the space of integral elements of $\mathcal{I}_\lambda$ at $(s,b)$.  
Thus, equality holds in Cartan's Test, which implies that such a flag 
is Cartan-regular. Consequently, the system is involutive, as was to be shown.
\end{proof}

\begin{remark}\label{analyticity}
Since $M$ is assumed to be smooth, it also carries a real-analytic structure (in fact, many, but all of them are equivalent), and $X$ will naturally inherit a real-analytic structure once one is chosen on $M$.  Because of the nature of their construction, the forms $\Upsilon_3, \Upsilon_4, \Upsilon_5, \Upsilon_6$ will be real-analytic with respect to the real-analytic structure on $X$.  Hence $\mathcal{I}_\lambda$ is real-analytic with respect to any real-analytic structure on $X$.
\end{remark}

\begin{corollary}\label{cor: existence}
For any $(s,b)\in X$ with $\pi(s,b) = m\in M$, there exists a $\lambda$-soliton $(\sigma,\beta)$ for the closed $\Gtwo$-Laplacian flow on an $m$-neighborhood $U\subset M$ such that $(\sigma_m,\beta_m) = (s,b)$.  In fact, for any $7$-dimensional integral element $E\subset T_{(s,b)}X$ of $\mathcal{I}_\lambda$ on which $\omega$ is non-vanishing, there exists such a $(\sigma,\beta):U\to X$ such that $(\sigma,\beta)'(s,b)(T_mM) = E$.
\end{corollary}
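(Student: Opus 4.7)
The plan is to deduce the corollary as a direct application of the Cartan--K\"ahler Theorem to the involutive exterior differential system $\mathcal{I}_\lambda$ established in Theorem~\ref{involutivity}, combined with Proposition~2 to interpret the resulting integral manifold as a soliton.

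First, I would address the second, stronger statement, since the first statement follows from it once we know that admissible integral elements exist at every point. Fix $(s,b)\in X$ with $\pi(s,b)=m$, and fix a $7$-dimensional integral element $E\subset T_{(s,b)}X$ of $\mathcal{I}_\lambda$ with $\omega|_E\ne 0$. In the proof of Theorem~\ref{involutivity} it was shown that the set $\mathcal{V}(\mathcal{I}_\lambda,\omega)$ of admissible integral elements is a smooth submanifold of $\mathrm{Gr}(7,TX)$, and that Cartan's Test is satisfied with equality; hence the chosen $E$ is a Cartan-regular integral element. By Remark~\ref{analyticity}, after equipping $M$ (and therefore $X$) with a compatible real-analytic structure, the forms $\Upsilon_3,\Upsilon_4,\Upsilon_5,\Upsilon_6$ are real-analytic, so $\mathcal{I}_\lambda$ is a real-analytic involutive exterior differential system. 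The Cartan--K\"ahler Theorem then produces a real-analytic $7$-dimensional integral manifold $N\subset X$ passing through $(s,b)$ with $T_{(s,b)}N=E$.

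Next I would argue that $N$ is locally the graph of a section. Since $\omega=\pi^*(\ast_s1)$ is $\pi$-semibasic and nonzero on $E=T_{(s,b)}N$, the differential $\pi'(s,b)|_{T_{(s,b)}N}:T_{(s,b)}N\to T_mM$ is an isomorphism of $7$-dimensional vector spaces. By the inverse function theorem, $\pi|_N$ is a local diffeomorphism near $(s,b)$, so on a sufficiently small neighborhood $U$ of $m$ in $M$ there is a unique smooth section $(\sigma,\beta):U\to X$ whose graph coincides with $N$ near $(s,b)$. By construction $(\sigma_m,\beta_m)=(s,b)$ and $(\sigma,\beta)'(m)(T_mM)=E$. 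Applying Proposition~2 to this section, the vanishing on $N$ of $\Upsilon_3,\Upsilon_4,\Upsilon_5,\Upsilon_6$ pulls back via $(\sigma,\beta)$ to the four equations (2.4--7), and therefore $(\sigma,\beta)$ comes from a $\lambda$-soliton of the closed $\Gtwo$-Laplacian flow on $U$. Finally, the first statement of the corollary follows by simply choosing any $E\in\mathcal{V}(\mathcal{I}_\lambda,\omega)$ at $(s,b)$, which exists because the proof of Theorem~\ref{involutivity} exhibited $\mathcal{V}(\mathcal{I}_\lambda,\omega)$ as a nonempty affine space of dimension $301$ at every point.

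There is really no substantive obstacle here: all the work has already been done in Theorem~\ref{involutivity} (involutivity, nonemptiness and smoothness of the space of integral elements, Cartan-regularity) and in Remark~\ref{analyticity} (real-analyticity). The only small point that merits explicit mention is the transversality of $N$ to the $\pi$-fibers, which is where the assumption $\omega|_E\ne 0$ is used, and the identification of sections with graphs via Proposition~2. Everything else is a direct invocation of Cartan--K\"ahler in the real-analytic category.
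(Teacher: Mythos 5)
Your proposal is correct and follows essentially the same route as the paper, whose proof is simply a one-line citation of Theorem~\ref{involutivity}, Remark~\ref{analyticity}, and the Cartan--K\"ahler Theorem. Your additional details (the graph/section argument using that $\omega$ is $\pi$-semibasic and nonvanishing on $E$, and the appeal to the proposition identifying integral sections with solitons) are exactly the standard steps the paper leaves implicit.
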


\begin{proof}
In light of Theorem~\ref{involutivity} and Remark~\ref{analyticity}, 
Corollary~\ref{cor: existence} follows immediately 
from the Cartan-K\"ahler Theorem.
\end{proof}

\begin{remark}[Generality in Harmonic Coordinates]
Since the last nonzero character of $\mathcal{I}_\lambda$ is $s_7=7$, the na\"\i ve statement is that local $\lambda$-solitons for the closed $\Gtwo$-Laplacian flow depend on `seven functions of seven variables' in Cartan's terminology\footnote{The Cartan characters $s_i$ indicate how much essential choice there is in building up a solution to the equations via a sequence of (possibly underdetermined) Cauchy problems.}.  However, this reflects the fact that the equations are invariant under the full diffeomorphism group in dimension~$7$, which itself is described (locally) by specifying seven functions of seven variables.

There are a number of methods of, at least partially, `normalizing' the local solutions by choosing special coordinates.  The simplest one perhaps, is to ask how general the local solutions are when presented in harmonic coordinates for the associated metric $g_\sigma$.  One way to see this is to specialize to the case $M=\mathbb{R}^7$ with standard coordinates $u=(u^i)$ and impose the conditions $\d\bigl(\ast_\sigma(\d u^i)\bigr) = 0$.  This can be added to the exterior differential system by defining the seven $6$-forms 
\be
\Phi^i = \pi^*\bigl(\ast_s(\d u^i)\bigr)\in\Lambda^6(T^*_{(s,b)}X)
\ee
and then defining the extended ideal $\mathcal{I}_\lambda^+$ generated by $\mathcal{I}_\lambda$ and the seven closed $7$-forms $\d\Phi^i$.  Following the same argument as that used in Theorem~\ref{involutivity}, one can show that the system $\mathcal{I}_\lambda^+$ on $X$ is also involutive, but now with characters
\be
(s_0,s_1,s_2,s_3,s_4,s_5,s_6,s_7) = (0,0,1,3,7,15,30,0).
\ee

One can interpret the local integrals of $\mathcal{I}_\lambda^+$ as $\lambda$-solitons for the closed $\Gtwo$-Laplacian flow that are presented in harmonic coordinates.  In Cartan's terminology, such solitons depend locally on $s_6 = 30$ functions of six variables.  However, the choice of a local harmonic function, given $\sigma$, depends on 2 functions of six variables, so a choice of a harmonic coordinate system depends on $2\times 7 = 14$ functions of six variables.  Thus, one can argue that local solitons $(\sigma,\beta)$ depend on $16 = 30-14$ functions of six variables, in the sense that the rate of growth of the dimensions of the $k$-jets of solutions up to diffeomorphism is the same as the rate of growth of 16 functions of 6 variables.
\end{remark}

\begin{remark}[Analyticity]
Another reason for considering the presentation of $\lambda$-solitons $(\sigma,\beta)$ in harmonic coordinates is that it makes clear the regularity of such structures.  Because the last nonzero character of $\mathcal{I}_\lambda^+$ is $s_6 = 30$, it follows~\cite[Chapter V]{BCGGG} that the complex characteristic variety $\Xi_E\subset\bbP\bigl((E\otimes\bbC)^*\bigr)\simeq\mathbb{CP}^6$ is a subvariety of (complex) dimension $5$ for every integral element $E$.  Because of the $\Gtwo$-invariance of the system and the fact that $\Gtwo$ acts transitively on (real) $2$-planes in $\mathbb{R}^7$, the only possibility for the support of this characteristic variety for a given solution $(\sigma,\beta)$ on a neighborhood of a point $u\in\mathbb{R}^7$ is the set of complex co-vectors in $\bbP\bigl((T^*_u\mathbb{R}^7){\otimes}\bbC\bigr)$ that are null for $g_{\sigma_u}$.  In particular, the complex characteristic variety has no real points, implying that the associated PDE system is (overdetermined) elliptic. Consequently, 
$\lambda$-solitons that are $C^{1,\alpha}$ in harmonic coordinates 
(for some $\alpha>0$) must be real-analytic in those coordinates~\cite{Taylor1}.
\end{remark}

\section{The gradient case}

In this final section, I will discuss what is known about the special case that the $\lambda$-soliton~$(\sigma,X)$ is a \emph{gradient} soliton, i.e., $X = \nabla^{g_\sigma}f$ for some function $f$.   This case is interesting partly because, in the case of solitons for the Ricci flow, particularly in dimension $3$, the gradient solitons are the most important.

\begin{remark}[Ricci solitons in dimension $3$]
Just for comparison, it might be interesting to note that the generality analysis for general Ricci solitons in dimension~$3$, i.e., pairs $(g,X)$ of a metric $g$ and a vector field $X$ on a manifold $M^3$ that satisfy
\be
\mathrm{Ric}(g) = \lambda\,g + \mathcal{L}_Xg\,,
\ee
shows that, up to local diffeomorphism, the local solutions of this system depend on $s_2 = 6$ functions of two variables.  If one adds the `gradient condition' that $X = \nabla^gf$ for some function $f$, then one sees that, up to local diffeomorphism, the local solutions satisfying this extra condition depend on $s_2 = 2$ functions of two variables.  In both cases, the associated exterior differential system is involutive. See \cite[\S5.5]{Bryant2} for the gradient case.
\end{remark}

Many explicit examples of gradient $\lambda$-solitons are known, as can be seen by looking at the examples constructed in the articles~\cite{Ball2020,Fowdar,HaskinsNordstrom2021,Lauret2017,Lotay2017}.  However, as of this writing, it is not known what their generality is in the sense of \'E.~Cartan.  I will conclude this article by explaining where the difficulty lies.

\subsection{Formulation}
The gradient $\lambda$-soliton case, where $X = \nabla^{g_\sigma}f$ for some function~$f$, is locally equivalent to requiring that $X^\flat$, the $1$-form that is $g_\sigma$-dual to the vector field $X$, be closed.

Starting with the algebraic identity of $5$-forms
\be\label{5formidentity}
(X\lhk\sigma)\w\sigma = 2 X^\flat \w {\ast_\sigma}\sigma\,
\ee
and applying the exterior derivative 
to the left hand side of \eqref{5formidentity} gives
\be\label{5formidentity2}
\d\bigl((X\lhk\sigma)\w\sigma\bigr) = \d(X\lhk\sigma)\w\sigma 
= (\d\tau-\lambda\,\sigma)\w\sigma = 0,
\ee
since $\d({\ast_\sigma}\sigma) = \tau\w\sigma$ and $\d\sigma=0$ implies $\d\tau\w\sigma=0$.  Meanwhile applying the exterior derivative 
to the right hand side of of \eqref{5formidentity} (which is closed because
of \eqref{5formidentity2}) gives
\be
0 = 2\,\d(X^\flat)\w {\ast_\sigma}\sigma  - 2\,X^\flat\w \tau\w\sigma.
\ee
In particular, if $\d(X^\flat)=0$, 
which is the condition for a gradient $\lambda$-soliton, 
it follows that $X^\flat\w\tau\w\sigma = 0$, which is a algebraic relation on $X$ and $\tau$ that is equivalent to 
\be\label{Xtautorsion}
X\lhk \tau = 0,
\ee
i.e., the vanishing of a $1$-form that is bilinear in $X$ and $\tau$.

\subsection{Non-involutivity}
The condition $\d(X^\flat)=0$ 
is expressed in terms of $\beta$ as
\be\label{dXflat=0}
\d\bigl({\ast_\sigma}(\beta\w{{\ast_\sigma}\sigma})\bigr)=0.
\ee 

Thus, it is natural to study the gradient case by augmenting the differential ideal $\mathcal{I}_\lambda$ by adding to it the closed $2$-form $\d\zsf$, where $\zsf$ is the $1$-form on $X$ defined as
$$
\zsf_{(s,b)} = \pi^*\bigl({\ast_s}(b\w{{\ast_s}s})\bigr).
$$

Unfortunately, as \eqref{Xtautorsion} shows, this augmented differential ideal 
does not have any admissible integral elements at any~$(s,b)\in X$ 
for which the $6$-form $\bigl({\ast_s}(b\w{{\ast_s}s})\bigr)\w b \w s$ is nonzero.
Hence, the augmented ideal is not involutive.

The standard `next step' in such a case is to pull back the augmented differential ideal to the (nonsmooth) subvariety $Z\subset X$ on which $\zsf\w\bsf\w\ssf = 0$ and check that exterior differential system on $Z$ for involutivity.  Unfortunately, calculation shows that that exterior differential system is not involutive either, even on the smooth locus of~$Z$.

Two successive applications of the standard prolongation procedure to this system does not yield an involutive exterior differential system, and the calculations become increasingly complicated.  So far, no method has been found that leads to an involutive system whose solutions correspond to the gradient $\lambda$-solitons.

As a result, the generality of the gradient $\lambda$-solitons remains unknown 
as of this writing.  All we have is a collection of the various examples
already mentioned to show that gradient $\lambda$-solitons do, indeed, exist.

\bibliographystyle{hamsplain}

\providecommand{\bysame}{\leavevmode\hbox to3em{\hrulefill}\thinspace}

\end{document}